\titleformat{\subsubsection}[runin] {\normalfont\bfseries}{\thesubsubsection}{0.7em}{\addperiod}
\newcommand{\addperiod}[1]{#1.}
\titlespacing\subsubsection{5pt}{4pt plus 4pt minus 2pt}{5pt plus 2pt minus 2pt}
\newtheorem{theorem}{Theorem}
\newtheorem{question}[theorem]{Question}
\newcommand{\E}{\mathbb{E}}
\newcommand{\eps}{\varepsilon}
\newcommand{\ol}{\overline}
\DeclareMathOperator*{\esssup}{ess\,sup}
\title{A density version for  H\"aggstr\"om's theorem}
\author{Itai Benjamini \and Ori Gurel-Gurevich}
\begin{document}

\maketitle
\begin{abstract}
Given invariant percolation on a regular tree, where the probability of an edge to be open equals $p$,
is it always possible to find an infinite self-avoiding path along which the density of open edges is bigger then $p$?
\end{abstract}

Let $S$ be an invariant percolation on the edges of the $d$-regular tree, where the probability of an edge being open equals $p$. We think of $S$ as an invariant process with values in $\{0,1\}$ (1 corresponds to open edges). For $\ol{x}=(x_0,x_1,x_2,\dots)$ an infinite self-avoiding path, let $D(\ol{x})$ be the density of the percolation along $\ol{x}$, that is,
$$D(\ol{x})=\limsup_{n\to\infty} \frac{1}{n}\sum_{k=1}^n S(x_{k-1},x_k)$$
and let
$$D(S)=\sup_{\ol{x}} D(\ol{x}) \ .$$

In general, this is a random variable that is $F_\infty$-measurable, where $F_\infty$ is the tail $\sigma$-algebra. We may look at the essential supremum of this random variable and define
$$D_d(p)=\inf_S \esssup D(S) \ ,$$
where the infimum is taken over all invariant percolation distributions on the $d$-regular tree. For background on invariant percolation, see e.g. \cite{BLPS}.


Obviously, $D_d(p)$ is monotone in $p$ and $D_d(p)\ge p$.

\begin{question}
Is $D_d(p)>p$ for any $d\ge3$ and $0<p<1$?
\end{question}

More generally we may ask
\begin{question}
What is $D_d(p)$?
\end{question}

In his seminal paper \cite{H}, Olle H\"aggstr\"om proved that any invariant percolation on the $d$-regular tree, with marginal at least $\frac{2}{d}$, has an infinite cluster. In particular, we get that $D_d(\frac{2}{d})=1$ and specifically $D_3(\frac23)=1$.

\begin{theorem}
$D_3\left(1-\frac{1}{\sqrt{3}}\right)\ge \frac12$.
\end{theorem}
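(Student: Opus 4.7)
The key algebraic observation driving the proof is that $p = 1 - 1/\sqrt{3}$ satisfies $(1-p)^2 = 1/3$, so $1 - (1-p)^2 = 2/3$, which is exactly H\"aggstr\"om's threshold $2/d$ in the case $d=3$. The plan is therefore to pair $S$ with an independent copy of itself, apply H\"aggstr\"om to the union process to produce an infinite cluster, and then extract a path of density at least $1/2$ by a symmetry argument between the two copies.

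First I would take an independent copy $S'$ of $S$ and define the union process $U(e) = \max(S(e), S'(e))$. Since $S$ and $S'$ are independent and invariant, $U$ is an invariant percolation on the $3$-regular tree with marginal $1 - (1-p)^2 = 2/3$. By H\"aggstr\"om's theorem, $U$ almost surely has an infinite cluster, and in particular contains an infinite self-avoiding ray. I would select such a ray $\ol{x} = (x_0,x_1,x_2,\ldots)$ as a measurable function of $U$ alone (for instance the lexicographically first infinite ray in the infinite $U$-cluster with respect to a fixed enumeration of the vertices).

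Since every edge of $\ol{x}$ is open in $U$, we have $S(x_{k-1},x_k) + S'(x_{k-1},x_k) \ge 1$ for every $k$, and therefore
$$\limsup_{n\to\infty}\frac{1}{n}\sum_{k=1}^n S(x_{k-1},x_k) \;+\; \limsup_{n\to\infty}\frac{1}{n}\sum_{k=1}^n S'(x_{k-1},x_k) \;\ge\; 1$$
by subadditivity of $\limsup$. Consequently at least one of the two path-densities is at least $1/2$. Because $\ol{x}$ depends only on $U$, the swap $(S,S') \leftrightarrow (S',S)$ is a measure-preserving involution that fixes $\ol{x}$; it therefore exchanges the two events ``$S$-density along $\ol{x}$ is $\ge 1/2$'' and ``$S'$-density along $\ol{x}$ is $\ge 1/2$'', which are then equiprobable, while their union has probability $1$. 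Each consequently has probability at least $1/2$, giving $\P(D(S)\ge 1/2)\ge 1/2$ and hence $\esssup D(S) \ge 1/2$. Since the invariant percolation $S$ of marginal $p$ was arbitrary, this yields $D_3(1-1/\sqrt 3) \ge 1/2$.

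I do not anticipate a serious obstacle: once the identity $1 - (1-p)^2 = 2/3$ is spotted, the argument is essentially H\"aggstr\"om's theorem plus a one-line symmetry. The only point that deserves some care is verifying that the ray $\ol{x}$ can be chosen as a measurable function of $U$ only, so that the $S \leftrightarrow S'$ symmetry used at the last step is genuine; picking it by a deterministic rule on $U$ (such as the lexicographic one above) suffices.
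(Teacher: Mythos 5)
Your proof is correct and takes essentially the same route as the paper: two iid copies of $S$, their maximum has marginal $1-(1-p)^2=\frac{2}{3}=\frac{2}{d}$, H\"aggstr\"om's theorem yields an infinite open ray in the union, and along it the two copies' densities sum to at least $1$. Your measurable-selection-plus-symmetry step is just a careful spelling out of the paper's terse ``at least one of the two original percolations must have $D(\ol{x})\ge\frac12$''; in fact, since $D(S)$ and $D(S')$ are independent and $\P\bigl(D(S)<\frac12 \text{ and } D(S')<\frac12\bigr)=0$, you even get $D(S)\ge\frac12$ almost surely rather than with probability at least $\frac12$.
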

\begin{proof}
Take two iid samples from the percolation distribution and look at their maximum. If $p\ge 1-\frac{1}{\sqrt{3}}$ then this new percolation has marginal $\ge\frac23$, so by H\"aggstr\"om's theorem  there is an infinite cluster a.s. and in particular there is $\ol{x}$ with all the edges open. At least one of the two original percolations must have $D(\ol{x})\ge \frac12$, so $D(S)\ge \frac12$
\end{proof}

More generally, define
$$a(d,k)=1-\sqrt[k]{1-\frac{2}{d}} .$$

\begin{theorem}
$D_d(a(d,k))\ge \frac{1}{k}$
\end{theorem}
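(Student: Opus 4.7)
The plan is to generalize directly the argument used for the $k=2$ case above. I would draw $k$ independent samples $S_1,\dots,S_k$ from the given invariant percolation distribution and consider the pointwise maximum $S^{*}=\max(S_1,\dots,S_k)$, i.e.\ the edge process in which an edge is declared open exactly when it is open in at least one of the $S_i$. Because the $S_i$ are i.i.d.\ with open-edge marginal $p$, the marginal of $S^{*}$ equals $1-(1-p)^k$, and at $p=a(d,k)$ this is exactly $2/d$. Since $S^{*}$ is a measurable functional of an i.i.d.\ tuple of invariant processes it is itself invariant, so H\"aggstr\"om's theorem applies and produces, almost surely, an infinite cluster of $S^{*}$ and hence an infinite self-avoiding ray $\ol{x}=(x_0,x_1,\dots)$ every edge of which is open in $S^{*}$.

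The remainder is a pigeonhole argument along $\ol{x}$. For every edge of $\ol{x}$ at least one of the $S_i$ is open, so the partial sums $n_i(N)=\sum_{j=1}^{N} S_i(x_{j-1},x_j)$ satisfy $\sum_{i=1}^{k} n_i(N)\geq N$, and therefore $\max_{i} n_i(N)/N \geq 1/k$ for every $N$. Taking $\limsup_{N\to\infty}$ and using that for a finite index set one has $\limsup_N \max_i n_i(N)/N = \max_i \limsup_N n_i(N)/N$ (by passing to a subsequence along which the maximum is attained, then extracting a further subsequence on which the argmax index is constant by finite pigeonhole), we conclude that some $S_i$ satisfies $D(S_i)\geq D_i(\ol{x})\geq 1/k$ on a set of positive probability. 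Since each $S_i$ is distributed as $S$, this forces $\esssup D(S)\geq 1/k$, and taking the infimum over invariant $S$ of marginal $p=a(d,k)$ yields $D_d(a(d,k))\geq 1/k$.

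I do not foresee a genuine obstacle: the argument is a mechanical lift of the $k=2$ case. The only mild subtlety is the interchange of $\limsup$ with a finite maximum, which is needed because the index $i$ realizing $\max_i n_i(N)/N$ a priori depends on $N$; this is settled by the one-line finite-pigeonhole observation indicated above.
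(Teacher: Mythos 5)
Your proposal is correct and is exactly the paper's argument: the paper proves this theorem by saying ``the same proof as the previous theorem, except that you take $k$ copies,'' i.e.\ the maximum of $k$ i.i.d.\ samples has marginal $1-(1-a(d,k))^k = \frac{2}{d}$, H\"aggstr\"om's theorem yields an infinite open ray, and pigeonhole gives some copy density at least $\frac1k$ along it. Your explicit handling of the interchange of $\limsup$ with the finite maximum is a fine point the paper leaves implicit, but it is the same proof.
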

\begin{proof}
The same proof as the previous theorem, except that you take $k$ copies and work on the $d$-regular tree.
\end{proof}

Notice that for $d=3$ and $2\le k \le 5$ we have $a(d,k)<\frac{1}{k}$, so we get that $D_3(p)>p$ for any $p\in [a(3,k),\frac1{k})$, but if $k\ge 6$ then $a(3,k)>\frac{1}{k}$, so we obtain no new information.

However, for $d\ge 4$ we have $a(d,k)<\frac{1}{k}$ for all $k$, so we get some that $D_d(p)>p$ for any $p\in \cup_{k=1}^\infty [a(d,k),\frac1{k})$.

In fact,
\begin{theorem}
For any $d\ge 4$ and any $0<p<1$ we have $D_d(p)>p$.
\end{theorem}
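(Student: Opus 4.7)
The plan is to apply Theorem~2 together with the monotonicity of $D_d(p)$ in $p$ noted in the text. If I can show that for every $p \in (0,1)$ there exists an integer $k \ge 1$ with $a(d,k) \le p < 1/k$, then
$$D_d(p) \;\ge\; D_d(a(d,k)) \;\ge\; \frac{1}{k} \;>\; p,$$
and the conclusion follows. So the task reduces to showing that the half-open intervals $I_k = [a(d,k),\, 1/k)$ cover $(0,1)$ whenever $d \ge 4$.

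Both endpoints $a(d,k)$ and $1/k$ are strictly decreasing in $k$, $a(d,k) \to 0$ as $k \to \infty$, and $I_1 = [2/d,\, 1)$. Hence $\bigcup_{k \ge 1} I_k = (0,1)$ precisely when no gap is left between consecutive intervals, i.e.\ when $a(d,k) \le 1/(k+1)$ for every $k \ge 1$. Unwinding the definition $a(d,k) = 1 - (1 - 2/d)^{1/k}$, this rearranges to the elementary inequality
$$\left(\frac{k}{k+1}\right)^k \;\le\; 1 - \frac{2}{d}.$$

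The sequence $(k/(k+1))^k = 1/(1+1/k)^k$ is decreasing in $k$ (since $(1+1/k)^k$ is increasing), starts at $1/2$ for $k=1$, and tends to $1/e$. For $d \ge 4$ we have $1 - 2/d \ge 1/2$, so the inequality holds for every $k$, with equality only in the single boundary case $d=4,\, k=1$ (where $I_1$ and $I_2$ still meet at exactly $1/2$, so no point is missed). I do not foresee a serious obstacle: once Theorem~2 is in hand the remainder is a routine interval-covering check, and the only mild care required is the equality case at $d=4,\, k=1$.
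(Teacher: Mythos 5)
Your proof is correct and takes essentially the same route as the paper: both reduce the theorem to the covering $\bigcup_{k\ge1}\left[a(d,k),\frac1k\right)=(0,1)$ via Theorem 2 and monotonicity, and both verify the key inequality $a(d,k)\le\frac{1}{k+1}$ by rearranging it to $\left(\frac{k}{k+1}\right)^k\le 1-\frac{2}{d}$ and using that the left-hand side is decreasing in $k$ with value $\frac12$ at $k=1$. Your explicit check of the boundary case $d=4$, $k=1$ is a minor added detail, not a different argument.
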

\begin{proof}
All we need to do is show that for $d\ge 4$ we have $\cup_{k=1}^\infty [a(d,k),\frac1{k})=(0,1)$. We claim that for any $d\ge 4$ and any $k\ge 1$ we have $a(d,k)\le \frac{1}{k+1}$ which means that these intervals are overlapping.

Now
$$1-\sqrt[k]{1-\frac{2}{d}}\le \frac{1}{k+1}$$
is equivalent to
$$\left(1-\frac{1}{k+1}\right)^k \le 1-\frac{2}{d}$$
and the left hand side is decreasing (as a function of $k$) so the maximum is obtained for $k=1$ and it is $\frac12\le 1-\frac{2}{d}$.
\end{proof}

\begin{theorem}
For any $d$, the function $D_d$ is uniformly continuous.
\end{theorem}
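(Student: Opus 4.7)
The plan is to reduce uniform continuity to continuity: since $D_d$ is monotone on the compact interval $[0,1]$, continuity implies uniform continuity. Continuity will be established by an independent-noise coupling combined with a large-deviation estimate on Bernoulli densities along infinite paths in the $d$-regular tree. The key auxiliary quantity is
$$M(q,d) := \esssup D(B),$$
where $B$ denotes Bernoulli$(q)$ edge-percolation on $T_d$ and $D(\cdot)$ is the functional from the introduction. The input I will need is $M(q,d)\to 0$ as $q\to 0$.

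For this auxiliary claim I would use a first-moment / Borel--Cantelli argument. The expected number of self-avoiding paths of length $n$ from a fixed vertex containing at least $\alpha n$ open edges is at most $d(d-1)^{n-1} e^{-n I_q(\alpha)}$, where $I_q(\alpha) = \alpha \log(\alpha/q) + (1-\alpha)\log((1-\alpha)/(1-q))$ is the Bernoulli rate function. When $I_q(\alpha) > \log(d-1)$ these expectations are summable in $n$, so almost surely no such finite path exists for $n$ large; this forces $D(\ol{x})\le\alpha$ for every infinite $\ol{x}$ and hence $\sup_{\ol{x}} D(\ol{x})\le\alpha$ a.s. Since $I_q(\alpha)\to\infty$ as $q\to 0$ for each fixed $\alpha>0$, we get $M(q,d)\to 0$.

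For the main continuity step, fix $p\in[0,1)$ and $\eps\in(0,1-p]$. Starting from an invariant $S$ with marginal $p$ satisfying $\esssup D(S)\le D_d(p)+\eta$, attach an independent Bernoulli$(r)$ edge process $B$ with $r=\eps/(1-p)$. Then $S\vee B$ is invariant with marginal $p+\eps$, and $(S\vee B)(e)\le S(e)+B(e)$ together with $\limsup(a_n+b_n)\le\limsup a_n+\limsup b_n$ give $D(S\vee B)(\ol{x})\le D(S)(\ol{x})+D(B)(\ol{x})$ for every $\ol{x}$. Taking sup over $\ol{x}$ and then essential sup (using $\esssup(X+Y)\le\esssup X+\esssup Y$) yields
$$D_d(p+\eps)\le D_d(p)+\eta+M\!\left(\frac{\eps}{1-p},\,d\right).$$
Letting $\eta\downarrow 0$, and running the symmetric construction starting from a near-optimal percolation with marginal $p-\eps$ to which independent noise is added to reach marginal $p$, gives the analogous control on $D_d(p)-D_d(p-\eps)$.

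Finally, I would combine these estimates with the trivial bound $D_d(p+\eps)-D_d(p)\le 1-p$ to extract uniform continuity. Given $\eta>0$: for $p\ge 1-\eta$ the trivial bound is already $\le\eta$; for $p\le 1-\eta$ one has $\eps/(1-p)\le\eps/\eta$, and the auxiliary claim lets me choose $\eps$ small enough (depending only on $\eta$ and $d$) that $M(\eps/\eta,d)\le\eta$, uniformly in $p$. The main obstacle is the auxiliary claim $M(q,d)\to 0$; once it is in place, the coupling and the splitting into the regimes $p\ge 1-\eta$ and $p\le 1-\eta$ are routine bookkeeping.
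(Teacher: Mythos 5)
Your proposal is correct and follows essentially the same route as the paper: sprinkle independent Bernoulli noise onto a near-optimal $S$ via the pointwise maximum, and control the noise's density along every infinite path by a first-moment count over the $d(d-1)^{n-1}$ self-avoiding paths of length $n$, using subadditivity of $\limsup$ to split the densities. The differences are cosmetic: you use the exact Chernoff rate $I_q(\alpha)$ where the paper uses the cruder bound ${n \choose an}\eps^{an}\le(2\eps^a)^n$, and you handle $p$ near $1$ via the trivial bound $D_d(p)\ge p$ with noise intensity $\eps/(1-p)$, where the paper instead uses noise $3\eps$ and invokes H\"aggstr\"om's theorem to dispose of $p>2/3$.
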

\begin{proof}
Fix $d$. Let $B$ be bernoulli percolation on the $d$-regular tree with marginal $\eps$. For a path of length $n$ the probability of getting at least $a$ 1's is bounded by
$${n \choose an} \eps^{an} \le (2 \eps^a)^n .$$

Since there are $d(d-1)^{n-1}$ paths of length $n$ we get that when $a>\frac{\log(2(d-1))}{\log(1/\eps)}$ the probability of a path with $an$ 1's decays exponentially. We conclude that
$$D(B)\le f_d(\eps) := \frac{\log(2(d-1))}{\log(1/\eps)} .$$

We now claim that if $0\le p < q \le 1$ and $q-p\le \eps$ then $D_d(q)-D_d(p)\le f_d(3\eps)$. This implies uniform continuity since $f_d(\eps)\to 0$ when $\eps\to 0$.

To show the claim, let $S$ be an invariant percolation with marginal $p$ and $B$ bernoulli percolation with marginal $3\eps$. Let $S'$ be their maximum. Then $S'$ has marginal $p+(1-p)3\eps \ge q$ (since we may assume that $p\le 2/3$, for $p > 2/3$ we have $D_d(p)=D_d(q)=1$). Therefore, with positive probability, there is an infinite path $\ol{x}$ such that the density of $s'$ along $\ol{x}$ is at least $D_d(q)$. But the contribution of $B$ to the density of $\ol{x}$ is at most $f_d(3\eps)$, so the density of $S$ along $\ol{x}$ is at least $D_d(q)-f_d(3\eps)$.
\end{proof}

In particular, $D_d(p)\to 1$ as $p\to\frac23$ so for some $a<\frac23$ we have $D_3(p) > p$ for all $p\in [a,1)$. However, we still don't know that $D_3(p)>p$ for all $0<p<1$ and specifically that $D_3(1/2)>(1/2)$.

When $d\to\infty$ we have that if $p=\frac{1}{d}+\frac{1}{d^2}$ we have $1-(1-p)^2= 2p - p^2 > \frac{2}{d}$ so again we have $D(p)\ge \frac12$. This works for any fixed $k$, so if we define the limit
$$D_\infty(x)=\lim_{d\to\infty} D\left(\frac{2}{d}x\right)$$
we know that $D_\infty(x)\ge \frac{1}{k}$ for any $x>\frac{1}{k}$.

\begin{question}
Is it true that $D_\infty(x)=x$?
\end{question}

%


Note that the same methods apply to site percolation on regular trees. However, in that case, as $d\to\infty$ the threshold in H\"aggstr\"om's theorem tends to $\frac12$ rather then 0. Indeed, the tree is a bipartite graph and the partition into two sides is invariant, hence we can define a percolation that choose one of the sides with equal probabilities and then put 1s on this side and 0s on the other. This gives a marginal of $\frac12$ and also density of $\frac12$ along any self-avoiding path. This percolation is ergodic, but have a nontrivial tail $\sigma$-algebra.

\begin{question}
What can be said about site percolation on regular trees if we require that the tail $\sigma$-algebra is trivial?
\end{question}

We may also consider more general processes, i.e. not $\{0,1\}$-valued.

\begin{question}
Is it true that for any invariant, non-constant process $S$ on the edges of a regular tree, $D(S)>\E[S(e)]$, where $e$ is some/any edge of the tree?
\end{question}

An interesting side question is this:
\begin{question}
Is it true that when you replace the $\limsup$ in the definition of $D(\ol{x})$ by $\liminf$ you get the same function? If not, do our result still hold for the $\liminf$ version?
\end{question}

\noindent {\bf Remark:} H\"aggstr\"om's theorem was extended to nonamenable Cayley graphs \cite{BLPS}, all the discussion above adapts to this set up.


\noindent
{\bf Acknowledgements:} thanks to Tom Hutchcroft for a useful comment.

\end{document}